\pdfoutput=1
\documentclass[11pt]{article}

\usepackage[margin=1in]{geometry}
\usepackage{amsmath,amssymb,amsthm,mathtools}
\usepackage{enumitem}
\usepackage{hyperref}
\usepackage{microtype}

\hypersetup{
  colorlinks=true,
  linkcolor=blue,
  citecolor=blue,
  urlcolor=blue
}

\newtheorem{theorem}{Theorem}[section]

\newtheorem{corollary}[theorem]{Corollary}

\theoremstyle{remark}
\newtheorem{remark}[theorem]{Remark}

\newcommand{\C}{\mathbb{C}}
\newcommand{\R}{\mathbb{R}}
\newcommand{\E}{\mathbb{E}}
\newcommand{\Et}{\widetilde{\mathbb{E}}}
\newcommand{\ip}[2]{\left\langle #1,#2\right\rangle}
\newcommand{\Tr}{\operatorname{Tr}}
\newcommand{\Herm}{\operatorname{Herm}}

\title{Degree-Four Vector-Coordinate SoS Cannot Detect the MUB Upper Bound}
\author{Shreyhaan Sarkar}
\date{\today}

\begin{document}
\maketitle

\begin{abstract}
We prove a degree-four Sum-of-Squares lower bound for the natural vector-coordinate formulation of mutually unbiased bases. For every dimension $d$ and every proposed number $m$ of bases, independent Haar-random orthonormal bases define a degree-four pseudoexpectation satisfying the orthonormality constraints and the cross-unbiasedness constraints in the quartic equality formulation. The same pseudoexpectation satisfies the degree-four localizing constraints for the $2\times 2$ Hermitian semidefinite formulation of the cross-coherence inequalities. Consequently, degree-four vector-coordinate Sum-of-Squares cannot refute the existence of $m$ mutually unbiased bases in $\C^d$, even when $m>d+1$ and genuine MUBs are impossible. We then show that this failure is formulation-dependent: in a projector-coordinate formulation, degree-four SoS recovers the standard upper bound $m\le d+1$. Thus degree-four vector-coordinate SoS and degree-four projector-coordinate SoS are separated already by the elementary MUB counting obstruction. Applied to Randomstrasse101 Open Problem 23, this gives a negative answer for the two vector-coordinate encodings explicitly described there, while leaving stronger lifted/projector formulations as a separate question.
\end{abstract}

\begin{center}
\small
\textbf{Keywords.} mutually unbiased bases; Sum-of-Squares; semidefinite programming; pseudoexpectations; Haar measure; projector coordinates; Gram matrices.
\end{center}

\section{Introduction}

A collection of orthonormal bases
\[
B_1,\ldots,B_m\subset \C^d
\]
is a collection of \emph{mutually unbiased bases} if, for every two distinct bases $B_k,B_\ell$ and every $v\in B_k,w\in B_\ell$,
\[
|\ip{v}{w}|=\frac{1}{\sqrt d}.
\]
Equivalently, $|\ip{v}{w}|^2=1/d$. It is known that $m\le d+1$, with equality achieved when $d$ is a prime power; see, for example, \cite{BBRV2002}. The first unresolved dimension is $d=6$, where it is conjectured that there do not exist seven mutually unbiased bases.

Randomstrasse101 Open Problem 23 asks whether there is a degree-four Sum-of-Squares proof that no seven mutually unbiased bases exist in $\C^6$ \cite{Randomstrasse2026}. The problem statement is explicitly phrased using $7\times 6$ vector variables $v_i^{(k)}\in\C^6$. It then says that the constraints may be encoded by quartic relations on the real and imaginary parts of the vector components, and it also gives an alternative $2\times2$ Hermitian semidefinite encoding of the cross-basis inequalities. This note studies exactly these two vector-coordinate encodings.

The main result is that degree four is intrinsically too weak for these vector-coordinate formulations. More generally, for every $d$ and every $m$, the standard vector-coordinate MUB system admits a degree-four pseudoexpectation. Thus degree-four vector-coordinate SoS cannot even recover the elementary upper bound $m\le d+1$.

The construction is simple. Take $m$ independent Haar-random orthonormal bases of $\C^d$. These bases are not mutually unbiased pointwise, but they satisfy the relevant constraints up to degree four. Same-basis orthonormality holds exactly, and if $x,y\in\C^d$ are independent Haar-random unit vectors, then
\[
\E |\ip{x}{y}|^2=\frac1d.
\]
At degree four, a quartic cross-unbiasedness equation can only be multiplied by a constant, so this gives a pseudoexpectation for the quartic equality formulation. For the $2\times2$ semidefinite formulation, we prove a reusable affine multiplier estimate: multiplication by $\ip{x}{y}$ has norm at most $1/d$ between the relevant affine-linear spaces.

We also include a formulation-separation result. If one instead uses projector variables $P=vv^*$, then the cross-unbiasedness condition becomes quadratic:
\[
\Tr(P_i^{(k)}P_j^{(\ell)})=\frac1d.
\]
Degree four in the projector variables is therefore a strictly stronger relaxation. In fact, a degree-four projector-coordinate SoS argument recovers the standard upper bound $m\le d+1$. Hence the failure of degree-four SoS proved here is not a universal failure of all formulations; it is a failure of the direct vector-coordinate formulation.

This formulation sensitivity is consistent with the broader optimization literature on MUBs. Kolountzakis, Matolcsi, and Weiner developed a positive-definite-functions approach to MUB upper bounds \cite{KMW2018}; Bandeira, Doppelbauer, and Kunisky analyzed limitations of that approach at low degree \cite{BDK2022}; and Gribling and Polak studied noncommutative polynomial-optimization hierarchies with symmetry reduction \cite{GriblingPolak2024}. The present paper is deliberately more elementary and formulation-specific: it isolates a degree-four obstruction for the direct vector-coordinate SoS relaxation.

\section{The exact Randomstrasse101 formulation}

The Randomstrasse101 entry defines two orthonormal bases $\{v_1,\ldots,v_d\}$ and $\{w_1,\ldots,w_d\}$ in $\C^d$ to be mutually unbiased when
\[
|\ip{v_i}{w_j}|=\frac1{\sqrt d}
\]
for all $i,j$ \cite{Randomstrasse2026}. It recalls that $\operatorname{MUB}(d)\le d+1$ and that equality is achieved in prime-power dimensions, then states the dimension-six conjecture $\operatorname{MUB}(6)<7$.

Open Problem 23 asks:
\begin{quote}
Is there a Sum-of-Squares degree 4 proof that there are no 7 Mutually Unbiased Bases in $\C^6$?
\end{quote}
The entry then explains that this is a statement about the existence of vectors
\[
v_i^{(k)}\in\C^6,
\qquad i=1,\ldots,6,
\qquad k=1,\ldots,7,
\]
satisfying the prescribed overlap pattern. The same paragraph says that these constraints can be encoded by quartic relations on the real and imaginary parts of the vector components, using
\[
(v^*u)(u^*v)=|\ip{u}{v}|^2.
\]
It then gives a second encoding: the same-basis constraints are quadratic, and the cross-basis conditions may be written as
\[
|\ip{v_i^{(k)}}{v_j^{(\ell)}}|\le \frac1{\sqrt6}
\]
for $k\ne \ell$, which are semidefinite constraints on $2\times2$ Hermitian matrices involving the quadratic quantity $\ip{v_i^{(k)}}{v_j^{(\ell)}}$.

\begin{remark}[Squared-overlap normalization]
In the displayed squared-overlap equation in the Randomstrasse101 PDF, the cross-basis value appears typographically as $1/\sqrt6$ inside an expression that is already squared. The definition immediately above, and the subsequent $2\times2$ LMI threshold, use the standard MUB convention $|\ip{v}{w}|=1/\sqrt6$. Thus throughout this paper the quartic equality is written as
\[
|\ip{v}{w}|^2=\frac16
\]
in dimension six, and as $|\ip{v}{w}|^2=1/d$ in dimension $d$.
\end{remark}

Thus the results below map directly onto the two vector-coordinate models described in the problem statement: the quartic equality model and the $2\times2$ Hermitian LMI model. We do not claim to settle a different model in which the variables are projectors $P=vv^*$ or noncommutative projector symbols.

\section{Vector-coordinate MUB systems}

Fix integers $d\ge2$ and $m\ge1$. For $k=1,\ldots,m$ and $i=1,\ldots,d$, let
\[
v_i^{(k)}\in\C^d
\]
be vector variables. We write
\[
v_i^{(k)}=(v_{i,1}^{(k)},\ldots,v_{i,d}^{(k)}).
\]
All polynomial statements below may be interpreted over the real coordinate ring generated by the real and imaginary parts of these coordinates. Complex notation is used for readability.

The same-basis orthonormality constraints are
\[
\ip{v_i^{(k)}}{v_j^{(k)}}=\delta_{ij},
\]
or equivalently, in real coordinates,
\[
\operatorname{Re}\ip{v_i^{(k)}}{v_j^{(k)}}=\delta_{ij},
\qquad
\operatorname{Im}\ip{v_i^{(k)}}{v_j^{(k)}}=0.
\]
These are quadratic equations.

For $k\ne \ell$, the quartic equality formulation imposes
\[
|\ip{v_i^{(k)}}{v_j^{(\ell)}}|^2=\frac1d.
\]
Equivalently,
\[
q_{ij}^{k\ell}:=
|\ip{v_i^{(k)}}{v_j^{(\ell)}}|^2-\frac1d=0.
\]
These are quartic equations in the real vector coordinates.

The semidefinite formulation instead imposes, for $k\ne\ell$,
\[
M_{ij}^{k\ell}:=
\begin{pmatrix}
1/\sqrt d & \ip{v_i^{(k)}}{v_j^{(\ell)}}\\
\ip{v_j^{(\ell)}}{v_i^{(k)}} & 1/\sqrt d
\end{pmatrix}
\succeq0.
\]
Together with same-basis orthonormality, these inequalities are classically equivalent to mutual unbiasedness. Indeed, if $B_\ell$ is an orthonormal basis and $x$ is a unit vector, then
\[
\sum_{j=1}^d |\ip{x}{v_j^{(\ell)}}|^2=1.
\]
If each summand is at most $1/d$, then every summand equals $1/d$.

\section{Degree-four pseudoexpectations and refutations}

A degree-four pseudoexpectation is a real linear functional
\[
\Et:\R[x]_{\le4}\to\R
\]
on real polynomials of degree at most four such that
\[
\Et[1]=1
\]
and
\[
\Et[f^2]\ge0
\]
for every real polynomial $f$ of degree at most two.

If equality constraints $q_s=0$ are present, we say that $\Et$ satisfies them to degree four if
\[
\Et[hq_s]=0
\]
whenever $\deg(hq_s)\le4$.

If a Hermitian matrix polynomial $M(x)\succeq0$ of degree at most two is present, the degree-four localizing condition is
\[
\Et[g(x)^*M(x)g(x)]\ge0
\]
for every vector polynomial $g$ whose entries have degree at most one. It is enough to verify this after complexifying the real polynomial space, because complex positivity restricts to real positivity.

A degree-four SoS refutation of an equality system has the form
\[
-1=\sum_r f_r^2+\sum_s h_s q_s,
\]
where $\deg(f_r)\le2$ and every term $h_s q_s$ has degree at most four. For a mixed equality and semidefinite system, a degree-four refutation additionally allows terms
\[
g_t^*M_tg_t
\]
with $g_t$ affine-linear. A degree-four pseudoexpectation satisfying all degree-four constraints rules out such a refutation, since applying $\Et$ to the identity gives $-1\ge0$.

\section{The Haar pseudoexpectation}

Let
\[
U_1,\ldots,U_m
\]
be independent Haar-random unitary matrices in $U(d)$. Define
\[
v_i^{(k)}:=U_ke_i.
\]
Thus $v_1^{(k)},\ldots,v_d^{(k)}$ are the columns of $U_k$ and form an orthonormal basis of $\C^d$.

For every real polynomial $p$ of degree at most four in the vector coordinates, define
\[
\Et[p]:=\E[p(v)].
\]
This is an actual expectation, hence
\[
\Et[1]=1,
\qquad
\Et[f^2]\ge0
\]
for every real polynomial $f$ of degree at most two.

Same-basis constraints hold pointwise. For every $k$,
\[
\ip{v_i^{(k)}}{v_j^{(k)}}=\delta_{ij}
\]
almost surely. Therefore, if $q$ is any same-basis orthonormality constraint and $\deg(hq)\le4$, then
\[
\Et[hq]=0.
\]

If $k\ne\ell$, then $v_i^{(k)}$ and $v_j^{(\ell)}$ are independent Haar-random unit vectors in $\C^d$. Therefore
\[
\E |\ip{v_i^{(k)}}{v_j^{(\ell)}}|^2=\frac1d.
\]
Indeed, by unitary invariance, fix $v_i^{(k)}=e_1$. Then
\[
|\ip{v_i^{(k)}}{v_j^{(\ell)}}|^2=|(v_j^{(\ell)})_1|^2,
\]
and by symmetry of a Haar-random unit vector,
\[
\E |(v_j^{(\ell)})_1|^2=\frac1d.
\]
Thus
\[
\Et\left[
|\ip{v_i^{(k)}}{v_j^{(\ell)}}|^2-\frac1d
\right]=0.
\]

Since this cross constraint has degree four in the vector coordinates, a degree-four SoS refutation can multiply it only by constants. Therefore the Haar pseudoexpectation satisfies all quartic equality constraints to degree four.

\begin{theorem}[Quartic vector-coordinate lower bound]\label{thm:quartic}
For every $d\ge2$ and $m\ge1$, the standard vector-coordinate quartic equality formulation of $m$ mutually unbiased bases in $\C^d$ admits a degree-four pseudoexpectation. Consequently, this formulation has no degree-four Sum-of-Squares refutation.
\end{theorem}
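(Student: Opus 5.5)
The plan is to take the Haar pseudoexpectation of the previous section as the witness, $\Et[p]:=\E[p(v)]$ with $v_i^{(k)}=U_ke_i$ for independent Haar unitaries $U_1,\ldots,U_m$, and to check the defining conditions of a degree-four pseudoexpectation satisfying the quartic equality system one at a time. First, normalization and positivity are automatic: $\Et$ is the expectation of an honest probability measure on the coordinates, so $\Et[1]=1$ and $\Et[f^2]=\E[f(v)^2]\ge0$ for every real $f$ of degree at most two. All moments up to degree four are finite because the coordinates are bounded by $1$, so $\Et$ is a well-defined real linear functional on $\R[x]_{\le4}$.

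Second, for the same-basis constraints, $\operatorname{Re}\ip{v_i^{(k)}}{v_j^{(k)}}-\delta_{ij}$ and $\operatorname{Im}\ip{v_i^{(k)}}{v_j^{(k)}}$ vanish identically on the support of the measure, since the columns of a unitary are orthonormal. Hence for any multiplier $h$ with $\deg(hq)\le4$, the random variable $h(v)q(v)$ is almost surely zero and $\Et[hq]=0$. For the cross constraints $q_{ij}^{k\ell}$ with $k\ne\ell$, the key point is degree counting: $q_{ij}^{k\ell}$ has degree exactly four in the real coordinates, because its top-degree part $|\ip{v_i^{(k)}}{v_j^{(\ell)}}|^2$ is a nonzero quartic form. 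So any $h$ with $\deg(hq_{ij}^{k\ell})\le4$ must be a constant $c$. It therefore suffices to show $\Et[q_{ij}^{k\ell}]=0$. This is the computation already recorded above. By independence and unitary invariance, condition on $v_i^{(k)}$ and rotate it to $e_1$. Then $v_j^{(\ell)}$, a column of a Haar unitary, is uniformly distributed on the unit sphere, so $\E|(v_j^{(\ell)})_1|^2=\frac1d\sum_r\E|(v_j^{(\ell)})_r|^2=\frac1d$.

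Finally, combining these, $\Et$ satisfies every constraint to degree four. Applying $\Et$ to a putative identity $-1=\sum_r f_r^2+\sum_s h_sq_s$ would give $-1\ge0$, a contradiction, as explained in the section on refutations. The only step needing care is the claim that multipliers of the cross constraints are constants. This requires that the degree of $q_{ij}^{k\ell}$ is really four and not lowered by cancellation. That holds because the quartic part is $|\sum_r \overline{v_{i,r}^{(k)}}v_{j,r}^{(\ell)}|^2$, which is nonzero, for example when $v_i^{(k)}=v_j^{(\ell)}=e_1$. Everything else is routine, so I do not expect a genuine obstacle.
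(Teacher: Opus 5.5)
Your proposal is correct and follows the paper's proof: you use the Haar expectation for normalization and positivity, note that the same-basis constraints vanish pointwise, and show that each quartic cross constraint has mean zero and can only receive constant multipliers. Your check that the quartic part of $q_{ij}^{k\ell}$ is a nonzero form is a small detail the paper leaves implicit; it confirms that the multipliers really are forced to be constants.
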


\begin{proof}
The pseudoexpectation is the Haar pseudoexpectation constructed above. Positivity follows because it is an actual expectation. Same-basis constraints vanish pointwise. Cross-basis quartic constraints have zero pseudoexpectation, and since they already have degree four, only constant multipliers are possible in a degree-four refutation. Applying the pseudoexpectation to any alleged degree-four refutation gives
\[
-1=\sum_r \Et[f_r^2]\ge0,
\]
which is impossible.
\end{proof}

\section{An affine Haar multiplier theorem}

We now isolate the estimate needed for the $2\times2$ Hermitian LMI formulation. Let $x,y$ be independent Haar-random unit vectors in $\C^d$, and set
\[
z=\ip{x}{y}.
\]
The theorem says that multiplication by $z$ couples affine-linear functions only weakly: its operator norm is at most $1/d$.

\begin{theorem}[Affine Haar multiplier bound]\label{thm:affine-multiplier}
Let $x,y$ be independent Haar-random unit vectors in $\C^d$, and let $z=\ip{x}{y}$. Let $\mathcal A$ be the complex vector space of affine-linear polynomials in the vector-coordinate variables of any finite collection of independent Haar-random orthonormal bases containing $x$ and $y$. Then for all $p,q\in\mathcal A$,
\[
\left|\E[\overline p\,zq]\right|
\le
\frac1d
\sqrt{\E|p|^2\,\E|q|^2}.
\]
Equivalently, the bilinear form
\[
(p,q)\longmapsto \E[\overline p\,\ip{x}{y}\,q]
\]
has operator norm at most $1/d$ with respect to the $L^2$ norms on affine-linear functions.
\end{theorem}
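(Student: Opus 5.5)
The plan is to compute $\E[\overline p\,zq]$ exactly, using the phase symmetries of Haar measure together with the second-moment formula for Haar unitaries, and then to finish with Cauchy--Schwarz.

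\textbf{Setup.} Write $x=U_ae_i$ and $y=U_be_j$ with $a\ne b$; the case $a=b$ is excluded, since $x,y$ are assumed independent. Every affine-linear $p\in\mathcal A$ is affine in the complex coordinates and their conjugates. We decompose it as
\[
p=c+\sum_{k}\bigl(L_k^+(U_k)+L_k^-(\overline{U_k})\bigr),
\qquad
L_k^+(U_k)=\sum_{t,s}\alpha^{(k)}_{ts}(U_k)_{ts},
\]
and similarly for $L_k^-$. Here $L_k^+$ is complex-linear in the entries of $U_k$ and $L_k^-$ is complex-linear in their conjugates.

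\textbf{Orthogonality of the pieces.} For each $k$ the Haar measure is invariant under $U_k\mapsto e^{i\theta_k}U_k$. Under this torus action:
\begin{itemize}[nosep]
\item $L_k^{\pm}$ carries charge $\pm1$ in the $k$-th coordinate of the torus;
\item $c$ carries charge $0$;
\item $z=x^*y=\sum_r \overline{(U_a)_{ri}}(U_b)_{rj}$ carries charge $-1$ in $a$ and $+1$ in $b$.
\end{itemize}
Only charge-zero monomials survive expectation. Hence the pieces $c$, $L_k^{+}$, $L_k^{-}$ are mutually orthogonal in $L^2$, which gives
\[
\E|p|^2=|c|^2+\sum_k\bigl(\E|L_k^+|^2+\E|L_k^-|^2\bigr).
\]
By the Haar second-moment identity $\E[U_{ts}\overline{U_{t's'}}]=\delta_{tt'}\delta_{ss'}/d$, we get $\E|L_k^+|^2=\|\alpha^{(k)}\|_F^2/d$, and likewise for $L_k^-$.

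\textbf{Surviving terms.} In $\overline p\,z\,q$, the factors $\overline p$ and $q$ must jointly supply charge $+1$ in $a$ and $-1$ in $b$. Each of them contributes at most one linear piece, and each piece lives in a single basis. So exactly two configurations survive:
\begin{itemize}[nosep]
\item[(i)] $\overline p$ supplies the $a$-charge through $\overline{L_a^-(p)}$, and $q$ supplies the $b$-charge through $L_b^-(q)$;
\item[(ii)] $\overline p$ supplies the $b$-charge through $\overline{L_b^+(p)}$, and $q$ supplies the $a$-charge through $L_a^+(q)$.
\end{itemize}
The constant terms and all other bases drop out.

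In configuration (i), independence of $U_a$ and $U_b$ factorizes the expectation as
\[
\sum_r \E\bigl[\overline{L_a^-(p)}\,\overline{(U_a)_{ri}}\bigr]\,\E\bigl[(U_b)_{rj}L_b^-(q)\bigr].
\]
By the second-moment formula each factor equals $1/d$ times a single matrix entry, namely column $i$ of the coefficient matrix of $L_a^-(p)$, respectively column $j$ of that of $L_b^-(q)$. Cauchy--Schwarz over $r$, followed by bounding each column norm by the Frobenius norm, gives
\[
|\text{(i)}|\le \frac{1}{d^2}\|\alpha\|_F\|\beta\|_F=\frac1d\sqrt{\E|L_a^-(p)|^2\,\E|L_b^-(q)|^2}.
\]
Configuration (ii) is bounded in exactly the same way.

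\textbf{Conclusion.} Adding the two bounds and applying Cauchy--Schwarz in $\R^2$ gives
\[
|\E[\overline p zq]|\le\frac1d\sqrt{\bigl(\E|L_a^-(p)|^2+\E|L_b^+(p)|^2\bigr)\bigl(\E|L_b^-(q)|^2+\E|L_a^+(q)|^2\bigr)}.
\]
By the orthogonal decomposition above, the right-hand side is at most $\frac1d\sqrt{\E|p|^2\E|q|^2}$, which is the claimed bound.

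\textbf{Main obstacle.} The step needing the most care is the charge bookkeeping. One must confirm three things:
\begin{itemize}[nosep]
\item the holomorphic and antiholomorphic linear pieces of the same basis are orthogonal, using $\E[U_{ts}U_{t's'}]=0$;
\item no cross term involving the constant $c$ survives, since $z$ alone has nonzero charge;
\item the two surviving configurations use disjoint pieces of $p$ and of $q$, so that the final Cauchy--Schwarz step loses nothing.
\end{itemize}
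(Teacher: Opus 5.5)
Your proof is correct and follows essentially the same route as the paper. Phase-charge selection leaves exactly two blocks: $\overline{x}$-linear $p$ paired with $\overline{y}$-linear $q$, and $y$-linear $p$ paired with $x$-linear $q$; each evaluates via Haar second moments to $d^{-2}$ times an inner product of coefficient vectors, and Cauchy--Schwarz across the two $L^2$-orthogonal blocks finishes, exactly as in the paper. The only difference is that you use the coarser per-basis torus $U_k\mapsto e^{i\theta_k}U_k$ together with the full identity $\E[U_{ts}\overline{U_{t's'}}]=\delta_{tt'}\delta_{ss'}/d$ (then bound a column norm by the Frobenius norm), whereas the paper uses independent per-column phases, which discard the other columns of $U_a,U_b$ immediately.
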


\begin{proof}
We use the convention
\[
\ip{x}{y}=\sum_{r=1}^d \overline{x_r}y_r.
\]
The joint law of the Haar bases is invariant under independent phase rotations of each column. In particular, it is invariant under
\[
x\mapsto e^{i\theta}x,
\qquad
 y\mapsto e^{i\phi}y,
\]
and under independent phase rotations of all other columns.

Complexify the affine-linear space. Any affine-linear polynomial is a sum of a constant, linear coordinate terms, and conjugate-linear coordinate terms. In
\[
\E[\overline p\,zq],
\]
a monomial contribution can be nonzero only if its total phase is neutral under every independent column phase.

The phase bookkeeping is as follows; each row denotes any coordinate of the indicated vector:
\[
\begin{array}{c|c}
\text{factor} & \text{phase under } x\mapsto e^{i\theta}x,\; y\mapsto e^{i\phi}y \\
\hline
x & e^{i\theta}\\
\overline{x} & e^{-i\theta}\\
y & e^{i\phi}\\
\overline{y} & e^{-i\phi}\\
z=\ip{x}{y} & e^{-i\theta}e^{i\phi}
\end{array}
\]
A monomial contribution in $\E[\overline p\,zq]$ survives only if its total phase is trivial under both independent phase rotations. If a term of $p$ has phase exponent $\alpha\in\mathbb Z^2$ and a term of $q$ has phase exponent $\beta\in\mathbb Z^2$, then the corresponding term in $\overline p\,zq$ has exponent
\[
-\alpha+(-1,1)+\beta.
\]
Thus phase balance requires
\[
\beta=\alpha+(1,-1).
\]
Because $p$ and $q$ are affine-linear, the only possible matches are
\[
\alpha=(0,1),\quad \beta=(1,0),
\]
which gives
\[
p=a\cdot y,
\qquad
q=b\cdot x,
\]
and
\[
\alpha=(-1,0),\quad \beta=(0,-1),
\]
which gives
\[
p=a\cdot \overline x,
\qquad
q=b\cdot \overline y,
\]
for vectors $a,b\in\C^d$. Constant terms, all terms involving other columns, and all other $x,y$-linear or conjugate-linear combinations have unbalanced phase and integrate to zero.

For the first block, write
\[
p=a\cdot y=\sum_{r=1}^d a_r y_r,
\qquad
q=b\cdot x=\sum_{t=1}^d b_t x_t.
\]
Then
\[
\overline p=\sum_{r=1}^d \overline{a_r}\,\overline{y_r},
\qquad
z=\sum_{s=1}^d \overline{x_s}y_s,
\qquad
q=\sum_{t=1}^d b_t x_t.
\]
Using independence of $x$ and $y$,
\[
\begin{aligned}
\E[\overline p\,zq]
&=
\sum_{r,s,t}
\overline{a_r}b_t
\E[\overline{y_r}y_s]
\E[\overline{x_s}x_t].
\end{aligned}
\]
For a Haar-random unit vector $u\in\C^d$,
\[
\E[\overline{u_r}u_s]=\frac{\delta_{rs}}d.
\]
Therefore
\[
\E[\overline p\,zq]
=
\frac1{d^2}\sum_{r=1}^d \overline{a_r}b_r.
\]
Also,
\[
\E|p|^2=\frac{\|a\|^2}{d},
\qquad
\E|q|^2=\frac{\|b\|^2}{d}.
\]
Thus
\[
\left|\E[\overline p\,zq]\right|
\le
\frac{\|a\|\|b\|}{d^2}
=
\frac1d\sqrt{\E|p|^2\,\E|q|^2}.
\]

The second nonzero block, $p=a\cdot\overline x$ and $q=b\cdot\overline y$, gives the same estimate. The two blocks are orthogonal in $L^2$ by phase invariance. More explicitly, if $p_y$ and $p_{\overline x}$ denote the projections of $p$ onto the spans of the coordinates of $y$ and of $\overline x$, respectively, and $q_x,q_{\overline y}$ are defined similarly, then
\[
\E[\overline p\,zq]
=
\frac1{d^2}
\left(\langle a_y,b_x\rangle+\langle a_{\overline x},b_{\overline y}\rangle\right),
\]
whereas
\[
\E|p|^2\ge \frac{\|a_y\|^2+\|a_{\overline x}\|^2}{d},
\qquad
\E|q|^2\ge \frac{\|b_x\|^2+\|b_{\overline y}\|^2}{d}.
\]
Cauchy's inequality on the direct sum of the two relevant blocks gives the claimed bound.
\end{proof}

\section{\texorpdfstring{The $2\times2$ Hermitian semidefinite formulation}{The 2x2 Hermitian semidefinite formulation}}

Fix a cross pair
\[
x=v_i^{(k)},
\qquad
y=v_j^{(\ell)},
\qquad
k\ne\ell.
\]
Then $x,y$ are independent Haar-random unit vectors. Put
\[
z=\ip{x}{y},
\qquad
c=\frac1{\sqrt d}.
\]
The $2\times2$ LMI is
\[
M(x,y)=
\begin{pmatrix}
c & z\\
\overline z & c
\end{pmatrix}
\succeq0.
\]
The degree-four localizing condition requires that, for all affine-linear complex polynomials $p,q$,
\[
\Et\left[
c|p|^2+c|q|^2+\overline p\,zq+\overline q\,\overline z\,p
\right]\ge0.
\]

\begin{theorem}[LMI vector-coordinate lower bound]\label{thm:lmi}
For every $d\ge2$ and $m\ge1$, the Haar pseudoexpectation satisfies the degree-four localizing constraints for the $2\times2$ Hermitian semidefinite vector-coordinate formulation of $m$ mutually unbiased bases in $\C^d$. Consequently, this formulation has no degree-four Sum-of-Squares refutation.
\end{theorem}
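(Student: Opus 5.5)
The plan is to verify the localizing inequality displayed just before the statement directly, using Theorem~\ref{thm:affine-multiplier}, and then to combine it with the facts already established for Theorem~\ref{thm:quartic}. Fix a cross pair $x=v_i^{(k)}$, $y=v_j^{(\ell)}$ with $k\ne\ell$, and let $p,q$ be arbitrary complex affine-linear polynomials in all the vector coordinates. A general degree-one vector multiplier $g=(p,q)^T$ gives
\[
g^*Mg=c|p|^2+c|q|^2+\overline p\,zq+\overline q\,\overline z\,p .
\]
Since the Haar pseudoexpectation is an honest expectation, $\Et[g^*Mg]=\E[g^*Mg]$. As the paper remarks, complex positivity restricts to real positivity, so it suffices to treat complex $p,q$.

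The cross terms satisfy $\E[\overline q\,\overline z\,p]=\overline{\E[\overline p\,zq]}$, so
\[
\E[g^*Mg]= c\,\E|p|^2+c\,\E|q|^2+2\operatorname{Re}\E[\overline p\,zq].
\]
Theorem~\ref{thm:affine-multiplier}, applied with the full collection of $m$ independent Haar bases, gives $|\E[\overline p\,zq]|\le \frac1d\sqrt{\E|p|^2\E|q|^2}$. By AM--GM this is at most $\frac1{2d}(\E|p|^2+\E|q|^2)$. Hence
\[
\E[g^*Mg]\ge \Bigl(\tfrac1{\sqrt d}-\tfrac1d\Bigr)\bigl(\E|p|^2+\E|q|^2\bigr)\ge 0,
\]
since $d\ge 1$. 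In fact there is slack, because $1/d\le 1/\sqrt d$. The diagonal entries $c$ are constants, so no degree issue arises, and $g^*Mg$ has degree at most four because $z$ is quadratic.

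For the conclusion, I would combine this with what was shown in the proof of Theorem~\ref{thm:quartic}. The Haar functional has $\Et[1]=1$, satisfies $\Et[f^2]\ge 0$, and annihilates every multiple $hq$ with $\deg(hq)\le 4$ of a same-basis orthonormality constraint. Applying $\Et$ to any alleged refutation $-1=\sum_r f_r^2+\sum_s h_sq_s+\sum_t g_t^*M_tg_t$ then yields $-1\ge 0$, a contradiction.

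The only potentially delicate point is the reduction from the real coordinate ring to complex affine multipliers. A real affine-linear $g$ with real and imaginary parts separated is still a complex affine-linear polynomial in the coordinates and their conjugates. So the complexified space is exactly the space $\mathcal A$ used in Theorem~\ref{thm:affine-multiplier}, and there is no substantial obstacle, since the main estimate is supplied by that theorem.
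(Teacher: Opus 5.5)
Your proposal is correct and follows the paper's proof essentially step for step. You expand $g^*Mg$, bound the cross term via Theorem~\ref{thm:affine-multiplier}, absorb it using $1/d\le 1/\sqrt d$, and apply the Haar expectation to a putative refutation; the only cosmetic difference is that you apply AM--GM to $\frac1d\sqrt{AB}$ rather than to $cA+cB$, which gives the same estimate.
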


\begin{proof}
Fix a cross pair $x,y$, and let $z=\ip{x}{y}$ and $c=1/\sqrt d$. For affine-linear $p,q$, put
\[
A=\Et|p|^2,
\qquad
B=\Et|q|^2.
\]
By Theorem~\ref{thm:affine-multiplier},
\[
\left|\Et[\overline p\,zq]\right|
\le
\frac1d\sqrt{AB}.
\]
Therefore
\[
\begin{aligned}
&\Et\left[
c|p|^2+c|q|^2+\overline p\,zq+\overline q\,\overline z\,p
\right]\\
&=cA+cB+2\operatorname{Re}\Et[\overline p\,zq]\\
&\ge cA+cB-\frac2d\sqrt{AB}.
\end{aligned}
\]
Since $A+B\ge2\sqrt{AB}$ and $c=1/\sqrt d\ge1/d$, the last expression is nonnegative. Hence the localizing condition holds.

If a degree-four SoS refutation of the semidefinite formulation existed, applying the Haar pseudoexpectation would make the square terms nonnegative, the same-basis equality terms vanish pointwise, and the LMI localizing terms nonnegative by the preceding paragraph. The right side would therefore have pseudoexpectation at least zero, while the left side has pseudoexpectation $-1$, a contradiction.
\end{proof}

\section{A separation from projector-coordinate degree four}

The preceding results show that degree-four vector-coordinate SoS cannot see the MUB upper bound. This is not true for natural projector-coordinate formulations. In this section we make the comparison precise by isolating a centered Hilbert-Schmidt Gram subsystem that is implied by rank-one projector MUB variables.

Let
\[
P_i^{(k)}=v_i^{(k)}v_i^{(k)*}
\]
be rank-one Hermitian projectors. In projector variables, the cross-unbiasedness condition is
\[
\Tr(P_i^{(k)}P_j^{(\ell)})=\frac1d,
\qquad k\ne\ell,
\]
which is quadratic in the entries of $P$. Define the centered projectors
\[
Q_i^{(k)}=P_i^{(k)}-\frac Id.
\]
Then $Q_i^{(k)}$ belongs to the real Hilbert space $\Herm_0(d)$ of traceless Hermitian $d\times d$ matrices, whose dimension is
\[
D=d^2-1.
\]
With the Hilbert-Schmidt inner product, the MUB projector relations imply
\[
\Tr(Q_i^{(k)}Q_j^{(\ell)})=
\begin{cases}
\delta_{ij}-1/d, & k=\ell,\\
0, & k\ne\ell.
\end{cases}
\]

We now define the centered projector-coordinate Gram system used below. Choose an orthonormal real basis of $\Herm_0(d)$ and introduce variables
\[
q_i^{(k)}\in\R^D,
\qquad i=1,\ldots,d,
\qquad k=1,\ldots,m.
\]
The constraints are the quadratic Gram equations
\begin{equation}\label{eq:centered-projector-gram}
\ip{q_i^{(k)}}{q_j^{(\ell)}}=
\begin{cases}
\delta_{ij}-1/d, & k=\ell,\\
0, & k\ne\ell.
\end{cases}
\end{equation}
These are precisely the centered Hilbert-Schmidt Gram constraints implied by rank-one projector MUB variables. The theorem below uses only the constraints \eqref{eq:centered-projector-gram}. Hence it applies to any stronger projector-coordinate formulation in which these centered Gram relations are included, or in which they are derived as degree-two consequences of the imposed projector constraints.

\begin{theorem}[Projector-coordinate degree four recovers the MUB upper bound]\label{thm:projector-bound}
In the centered projector-coordinate Gram system \eqref{eq:centered-projector-gram}, degree-four SoS proves that $m\le d+1$. More precisely, if $m>d+1$, the constraints \eqref{eq:centered-projector-gram} have a degree-four SoS refutation.
\end{theorem}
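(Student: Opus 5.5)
The plan is to turn the classical dimension-counting proof of $m\le d+1$ into a degree-four certificate. We work in the Gram system \eqref{eq:centered-projector-gram} and use the trace form of the Cauchy--Schwarz inequality $(\Tr\Pi)^2\le D\,\Tr(\Pi^2)$. This inequality is a sum of squares of quadratic polynomials, and the constraints pin both $\Tr\Pi$ and $\Tr(\Pi^2)$ to constants.

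Concretely, define the $D\times D$ real symmetric matrix of degree-two polynomials
\[
\Pi=\sum_{k=1}^m\sum_{i=1}^d q_i^{(k)}q_i^{(k)\top},
\]
and put $t=\Tr\Pi/D$, which is a degree-two polynomial. Each entry of $\Pi-tI$ is a polynomial of degree at most two. So $S:=\|\Pi-tI\|_F^2$ is a sum of squares of degree-two polynomials, and we have the polynomial identity
\[
D\,S=D\,\Tr(\Pi^2)-(\Tr\Pi)^2 .
\]

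Next we reduce the right side modulo the constraints, using only degree-$\le4$ multiples.

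\emph{First trace.} We have $\Tr\Pi=\sum_{k,i}\|q_i^{(k)}\|^2$. This equals $m(d-1)$ plus linear combinations of the quadratic constraints $\|q_i^{(k)}\|^2-(1-1/d)$. Squaring, $(\Tr\Pi)^2-m^2(d-1)^2$ factors as $(\Tr\Pi-m(d-1))(\Tr\Pi+m(d-1))$. The first factor is a combination of constraints and the second has degree two, so every term has degree four.

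\emph{Second trace.} We have $\Tr(\Pi^2)=\sum_{k,\ell}\sum_{i,j}\ip{q_i^{(k)}}{q_j^{(\ell)}}^2$. For each pair, write $g-c=0$ for the corresponding constraint, where $c\in\{\delta_{ij}-1/d,\,0\}$. Then $g^2-c^2=(g-c)(g+c)$ is a degree-two multiple of a quadratic constraint. Hence $\Tr(\Pi^2)$ is congruent to $\sum_k\sum_{i,j}(\delta_{ij}-1/d)^2=m(d-1)$.

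Putting this together gives an identity
\[
D\,S=D\,m(d-1)-m^2(d-1)^2+\sum_s h_sq_s
\]
with $\deg(h_sq_s)\le4$. Since $D=d^2-1$, the constant equals
\[
D\,m(d-1)-m^2(d-1)^2=m(d-1)^2(d+1-m),
\]
which is strictly negative when $m>d+1$. Call it $-\kappa$ with $\kappa>0$. Dividing by $\kappa$ yields
\[
-1=\tfrac{D}{\kappa}\,S+\sum_s h'_sq_s,
\]
where $S$ is a sum of squares of polynomials of degree at most two. This is the required degree-four refutation, and it proves $m\le d+1$.

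The main point to check carefully is the degree bookkeeping. Every multiplier used against a quadratic Gram constraint must have degree at most two, and no term may exceed degree four. Both the $(g-c)(g+c)$ factorization and the factorization of $(\Tr\Pi)^2$ satisfy this.

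It is also worth noting that the projection structure of each $\Pi_k=\sum_i q_i^{(k)}q_i^{(k)\top}$ is never needed. Only the two traces enter the argument, which is why degree four suffices here, whereas the vector-coordinate relaxation of Theorems~\ref{thm:quartic} and~\ref{thm:lmi} could not detect the bound.
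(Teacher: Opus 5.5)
Your proof is correct and is essentially the paper's argument. Both use the same degree-four SoS polynomial $D\Tr(\Pi^2)-(\Tr\Pi)^2$: you write it as $D\|\Pi-tI\|_F^2$, while the paper expands it via the Lagrange identity $\sum_{r<s}(S_{rr}-S_{ss})^2+2D\sum_{r<s}S_{rs}^2$, and both reduce it to the constant $m(d-1)^2(d+1-m)$ using the same degree-two factorizations $(g-c)(g+c)$. Your explicit handling of $(\Tr\Pi)^2-m^2(d-1)^2$ as a degree-two multiple of the constraints spells out a step the paper leaves implicit.
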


\begin{proof}
Write the variables as $q_a\in\R^D$, where $a$ ranges over all pairs $(k,i)$. Define the $D\times D$ matrix
\[
S=\sum_a q_aq_a^T.
\]
The following identity is a degree-four sum of squares in the coordinates of the $q_a$'s:
\[
D\Tr(S^2)-(\Tr S)^2
=
\sum_{r<s}(S_{rr}-S_{ss})^2+2D\sum_{r<s}S_{rs}^2\ge0.
\]
Here each entry $S_{rs}$ is quadratic in the $q$-variables, so the displayed identity has degree four.

We reduce this SoS polynomial modulo the quadratic constraints \eqref{eq:centered-projector-gram}, allowing multipliers of degree at most two. First,
\[
\Tr S=\sum_a \|q_a\|^2
\equiv md\left(1-\frac1d\right)=m(d-1).
\]
Second,
\[
\Tr(S^2)=\sum_{a,b}\ip{q_a}{q_b}^2.
\]
For a fixed basis $k$,
\[
\sum_{i,j=1}^d (\delta_{ij}-1/d)^2=d-1,
\]
and the cross-basis inner products are constrained to be zero. Hence
\[
\Tr(S^2)\equiv m(d-1).
\]
These congruences are degree-four congruences: for example,
\[
\ip{q_a}{q_b}^2-c_{ab}^2=(\ip{q_a}{q_b}+c_{ab})(\ip{q_a}{q_b}-c_{ab}),
\]
where $c_{ab}$ denotes the corresponding right-hand side of \eqref{eq:centered-projector-gram}, and the multiplier has degree two.

Therefore
\[
D\Tr(S^2)-(\Tr S)^2
\equiv Dm(d-1)-m^2(d-1)^2
=m(d-1)^2(d+1-m)
\]
modulo the degree-four ideal generated by \eqref{eq:centered-projector-gram}. If $m>d+1$, this constant is negative. Let
\[
c=m(d-1)^2(m-d-1)>0.
\]
The preceding congruence says that the degree-four SoS polynomial $F:=D\Tr(S^2)-(\Tr S)^2$ satisfies
\[
F\equiv -c
\]
modulo the constraints. Equivalently, for the quadratic Gram constraints $g_s$ in \eqref{eq:centered-projector-gram}, there are polynomial multipliers $h_s$ of degree at most two such that
\[
F+c=\sum_s h_s g_s.
\]
Hence
\[
-1=\frac1c F-\frac1c\sum_s h_s g_s.
\]
This is a degree-four SoS refutation, since equality-constraint multiples may appear with arbitrary signs.
\end{proof}

\begin{corollary}[Formulation separation]\label{cor:separation}
For every $d\ge2$ and every $m>d+1$, degree-four vector-coordinate SoS admits the Haar pseudoexpectation and hence cannot refute $m$ proposed MUBs in $\C^d$, while degree-four centered projector-coordinate Gram SoS refutes the same instance. In particular, degree-four vector-coordinate SoS and degree-four projector-coordinate SoS are separated by the basic MUB upper bound.
\end{corollary}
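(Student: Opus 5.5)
The corollary follows by combining the three preceding theorems; no new estimate is needed. I will fix $d\ge2$ and $m>d+1$ and treat the two halves separately.

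For the vector-coordinate half, I will use Theorem~\ref{thm:quartic} and Theorem~\ref{thm:lmi}. Both hold for every $m\ge1$, and in particular for $m>d+1$. They show that the Haar pseudoexpectation $\Et$ built from $m$ independent Haar-random unitaries has three properties:
\begin{itemize}
\item it satisfies the same-basis orthonormality constraints to degree four, since they hold pointwise;
\item it satisfies the quartic cross-unbiasedness equalities, since only constant multipliers are allowed and $\E|\ip{x}{y}|^2=1/d$;
\item it satisfies the degree-four localizing constraints of the $2\times2$ LMI, by Theorem~\ref{thm:affine-multiplier} together with $c=1/\sqrt d\ge 1/d$.
\end{itemize}
Applying $\Et$ to any putative degree-four refutation in either vector-coordinate formulation then yields $-1\ge0$, so no such refutation exists. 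I will stress that nothing in this construction depends on $m$: the Haar bases exist for any number of bases. This is exactly why the obstruction survives past $d+1$.

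For the projector-coordinate half, Theorem~\ref{thm:projector-bound} directly gives, for $m>d+1$, an explicit degree-four refutation of the centered Gram system \eqref{eq:centered-projector-gram}. The refutation has the form $-1=\frac1c F-\frac1c\sum_s h_sg_s$, where $F=D\Tr(S^2)-(\Tr S)^2$ and $c=m(d-1)^2(m-d-1)>0$.

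The one point needing care is to argue that the two halves concern "the same instance". Both encode the existence of $m$ MUBs in $\C^d$, and the centered Gram constraints are degree-two consequences of rank-one projector MUB variables $P_i^{(k)}=v_i^{(k)}v_i^{(k)*}$. I will state this as a comparison of two relaxations of one combinatorial question, not as a claim that one proof system simulates the other. That is the only mild subtlety; everything else is immediate from the cited theorems. The separation statement then follows: for every $d\ge2$ and $m>d+1$, one formulation is refuted at degree four and the other is not.
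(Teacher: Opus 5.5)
Your proposal is correct and takes the paper's route: the corollary is the conjunction of Theorems~\ref{thm:quartic} and~\ref{thm:lmi}, which hold for every $m\ge1$ and hence for $m>d+1$, with the explicit refutation of Theorem~\ref{thm:projector-bound}, where $c=m(d-1)^2(m-d-1)>0$. You also read ``same instance'' correctly, as the same pair $(d,m)$ encoded by two different polynomial systems rather than a simulation between proof systems, which matches the paper's framing.
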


\begin{remark}
For a concrete instance, take $d=2$ and $m=4$. Four MUBs in $\C^2$ are impossible because $m\le3$. The degree-four vector-coordinate relaxation nevertheless has the Haar pseudoexpectation. By Theorem~\ref{thm:projector-bound}, the centered projector-coordinate degree-four relaxation refutes the instance. Similarly, in the dimension-six setting, the projector-coordinate argument refutes $m=8$ proposed bases but is exactly tight at $m=7$.
\end{remark}

\section{Consequences for dimension six and Open Problem 23}

Taking $d=6$ and $m=7$ in Theorems~\ref{thm:quartic} and~\ref{thm:lmi} gives the following consequences.

\begin{corollary}
In the vector-coordinate quartic equality formulation described in Randomstrasse101 Open Problem 23, there is no degree-four Sum-of-Squares proof that seven mutually unbiased bases do not exist in $\C^6$.
\end{corollary}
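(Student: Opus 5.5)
This corollary is the special case $d=6$, $m=7$ of Theorem~\ref{thm:quartic}, so the plan is simply to instantiate that theorem and check that its hypotheses and conclusion match the objects named in the statement. First I will identify the quartic equality formulation of Open Problem 23 with the standard vector-coordinate quartic system of Section~3. The variables are $v_i^{(k)}\in\C^6$ for $i=1,\ldots,6$ and $k=1,\ldots,7$, written in real and imaginary coordinates. The same-basis constraints are the quadratic equations $\ip{v_i^{(k)}}{v_j^{(k)}}=\delta_{ij}$. The cross-basis constraints are $|\ip{v_i^{(k)}}{v_j^{(\ell)}}|^2=1/6$ for $k\ne\ell$. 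Here I use the squared-overlap normalization fixed in the Remark of Section~2.

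Next I will invoke Theorem~\ref{thm:quartic} with $d=6\ge2$ and $m=7\ge1$. This produces the Haar pseudoexpectation built from seven independent Haar-random unitaries in $U(6)$. It satisfies all of the following:
\begin{itemize}[nosep]
\item the normalization $\Et[1]=1$;
\item positivity on squares of degree-two polynomials;
\item vanishing of every admissible multiple $\Et[hq]$ of a same-basis constraint, since these hold pointwise;
\item vanishing of each cross quartic $q_{ij}^{k\ell}$, which as a degree-four constraint can only be multiplied by constants.
\end{itemize}

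Finally I will argue by contradiction. A degree-four SoS proof of nonexistence is exactly a refutation $-1=\sum_r f_r^2+\sum_s h_s q_s$ with the degree bounds of Section~4. Applying $\Et$ to both sides gives $-1\ge 0$, which is impossible.

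There is no real obstacle in this argument. The only point needing care is definitional: one must confirm that ``degree-four SoS proof of nonexistence'' in the problem means precisely such a refutation of the quartic system. In particular, the multipliers of the quartic constraints must be constants, so that no higher-degree multiplier escapes the degree-four pseudoexpectation. With that identification, the corollary is immediate.
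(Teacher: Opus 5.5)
Your proposal is correct and matches the paper's argument. The paper obtains this corollary by taking $d=6$, $m=7$ in Theorem~\ref{thm:quartic}, where the Haar pseudoexpectation blocks any degree-four refutation because the quartic cross constraints can only carry constant multipliers.
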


\begin{corollary}
In the vector-coordinate $2\times2$ Hermitian semidefinite formulation described in Randomstrasse101 Open Problem 23, there is no degree-four Sum-of-Squares proof that seven mutually unbiased bases do not exist in $\C^6$.
\end{corollary}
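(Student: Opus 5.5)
This corollary is the case $d=6$, $m=7$ of Theorem~\ref{thm:lmi}, so the plan is to specialize that theorem and check that its hypotheses and conclusion line up with the formulation described in Open Problem 23.

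First, I will match the system to the one in the Randomstrasse101 entry. The variables are $v_i^{(k)}\in\C^6$ for $i=1,\ldots,6$ and $k=1,\ldots,7$. The same-basis constraints are the quadratic equations $\ip{v_i^{(k)}}{v_j^{(k)}}=\delta_{ij}$. The cross-basis constraints, for $k\ne\ell$, are the $2\times2$ Hermitian matrices $M_{ij}^{k\ell}\succeq0$ with diagonal entries $1/\sqrt6$ and off-diagonal entry $\ip{v_i^{(k)}}{v_j^{(\ell)}}$. This is exactly the encoding of the bound $|\ip{v_i^{(k)}}{v_j^{(\ell)}}|\le 1/\sqrt6$ described there.

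Next, I will fix the notion of proof. As in Section~4, a degree-four SoS proof of nonexistence means an identity
\[
-1=\sum_r f_r^2+\sum_s h_s q_s+\sum_t g_t^*M_tg_t,
\]
where $\deg f_r\le2$, each $h_sq_s$ has degree at most four, and each $g_t$ is affine-linear.

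With $d=6\ge2$ and $m=7\ge1$, Theorem~\ref{thm:lmi} supplies the Haar pseudoexpectation on this system. It makes the same-basis terms vanish pointwise and keeps every localizing term nonnegative; the latter rests on Theorem~\ref{thm:affine-multiplier} together with $c=1/\sqrt6\ge1/6$. Applying $\Et$ to the identity above therefore gives $-1\ge0$, which is a contradiction, so no such proof exists.

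There is no substantive obstacle here. The only point needing care is that the Problem 23 encoding places the threshold $1/\sqrt6$ on the diagonal, and this is precisely the constant $c=1/\sqrt d$ used in Theorem~\ref{thm:lmi}. I would also note that the conclusion does not depend on $m=7$ exceeding $d+1$ or not; the result holds for every $m$.
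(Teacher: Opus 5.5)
Your proposal is correct and follows the paper's own route. The paper also obtains this corollary by specializing Theorem~\ref{thm:lmi} to $d=6$, $m=7$: the Haar pseudoexpectation annihilates the same-basis equality terms, keeps the localizing terms nonnegative via Theorem~\ref{thm:affine-multiplier} and $1/\sqrt6\ge 1/6$, and so turns any putative degree-four refutation into $-1\ge 0$.
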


Thus, for the two vector-coordinate encodings explicitly described in Open Problem 23, the answer is negative. The proof is stronger than the dimension-six case: degree-four vector-coordinate SoS cannot refute the existence of $m$ mutually unbiased bases in $\C^d$ for any $m$, even for $m>d+1$.

However, Theorem~\ref{thm:projector-bound} shows that this negative result is not a blanket statement about all degree-four formulations. Projector variables already allow degree-four SoS to recover the general upper bound $m\le d+1$. The dimension-six problem at $m=7$ is exactly the tight case of that general bound, so the projector-coordinate separation does not settle whether a stronger projector-variable degree-four proof can rule out seven MUBs in $\C^6$.

\section{Scope and limitations}

The conclusion of this paper is formulation-dependent. It applies to the vector-coordinate formulations in which the variables are the real and imaginary parts of the vectors $v_i^{(k)}$. These are the two formulations described explicitly in Randomstrasse101 Open Problem 23: quartic real-coordinate equations and $2\times2$ Hermitian semidefinite constraints involving the quadratic inner products.

The result does not rule out degree-four Sum-of-Squares refutations in projector variables $P_i^{(k)}=v_i^{(k)}v_i^{(k)*}$ or in noncommutative/projector-algebra hierarchies. In projector variables, cross-unbiasedness is quadratic, so degree four can test squared deviations of cross overlaps. The Haar pseudoexpectation used here does not satisfy those stronger degree-four projector-variable constraints, because for independent Haar unit vectors $x,y$,
\[
|\ip{x}{y}|^2
\]
has expectation $1/d$ but nonzero variance.

Therefore the correct interpretation is:
\begin{quote}
The natural vector-coordinate degree-four SoS relaxation is intrinsically too weak to detect even the basic MUB counting obstruction, whereas projector-coordinate degree four is already strong enough to recover that obstruction.
\end{quote}

This separation suggests that any successful low-degree SoS approach to the dimension-six MUB problem must use a stronger lifted formulation than the direct vector-coordinate one.

\end{document}